\newtheorem{theo}{Theorem}[section]
\newtheorem{lemm}[theo]{Lemma}
\newtheorem{defi}[theo]{Definition}
\newtheorem{rema}[theo]{Remark}
\newcommand{\N}{\mathbb{N}}
\newcommand{\subjclass}[2][2010]{%
  \let\@oldtitle\@title%
  \gdef\@title{\@oldtitle\footnotetext{#1 \emph{Mathematics subject classification.} #2}}%
}
\newcommand{\keywords}[1]{%
  \let\@@oldtitle\@title%
  \gdef\@title{\@@oldtitle\footnotetext{\emph{Key words and phrases.} #1.}}%
}
\begin{document}
\title{A Menon-type Identity concerning Dirichlet characters and a generalization of the gcd function}
 \keywords{Menon-type identity, Dirichlet character, generalized gcd, Klee's function}
 
 \subjclass[2010]{11A07, 11A25}

\author[1]{Arya Chandran}

\author[2]{K Vishnu Namboothiri\thanks{Corresponding Author}}

\author[3]{Neha Elizabeth Thomas}
\affil[1]{Department of Mathematics, University College, Thiruvananthapuram, Kerala - 695034, India, Email : \texttt{aryavinayachandran@gmail.com}}

\affil[2]{Department of Mathematics, Government College, Ambalapuzha, Kerala - 688561, INDIA\\Department of Collegiate Education, Government of Kerala,  Email : \texttt{kvnamboothiri@gmail.com}}
\affil[3]{Department of Mathematics, SD College, Alappuzha, Kerala - 688003, India, Email : \texttt{nehathomas2009@gmail.com}}
\date{}
	\maketitle
	\begin{abstract}
		 Menon's identity is a classical identity involving gcd sums and the Euler totient function $\phi$. In a recent paper, Zhao and Cao derived the Menon-type identity $\sum\limits_{\substack{k=1}}^{n}(k-1,n)\chi(k) = \phi(n)\tau(\frac{n}{d})$, where $\chi$ is a Dirichlet character mod $n$ with conductor $d$. We derive an identity similar to this replacing gcd with a generalization it. We also show that some of the arguments used in the derivation of Zhao-Cao identity can be improved if one uses the method we employ here.
	\end{abstract}

\section{Introduction}
  Menon's identity that originally appeared in \cite{menon1965sum} is a gcd sum turning out to be equal to a product of the Euler totient function $\phi$ and the number of divisors function $\tau$. If $(m,n)$ denotes the gcd of $m$ and $n$, the identity states that
\begin{align}\label{menons-identity}
\sum\limits_{\substack{m=1\\(m,n)=1}}^n (m-1,n)=\phi(n)\tau(n).
\end{align}

 This identity was generalized by several authors in various directions. For example, B. Sury \cite{sury2009some} derived the following Menon-type identity $$\sum\limits_{\substack{1\leq m_1, m_2,\ldots,m_s\leq n\\(m_1,n)=1}}(m_1-1, m_2,\ldots,m_s,n)=\phi(n)\sigma_{s-1}(n)$$ where $\sigma_s(n)=\sum\limits_{d|n}d^s$ using properties of group actions. When $s=1$, this becomes the Menon's identity. Zhao and Cao \cite{zhao2017another} recently derived another  Menon-type identity 
 \begin{align}\label{zhao}
 \sum\limits_{\substack{k=1}}^{n}(k-1,n)\chi(k) = \phi(n)\tau(\frac{n}{d})
 \end{align}
 where $\chi$ is the Dirichlet character mod $n$ with conductor $d$. When $\chi$ is the principal character mod $n$, this identity reduces to the Menon's identity. A generalization of this Zhao-Cao identity involving even functions mod $n$ was derived by L. T{\'o}th  in \cite{toth2018menon}.

 For positive integers $a, b$ and $s$, E. Cohen \cite{cohen1956some} suggested a generalization of the gcd function which we denote in this paper by $(a,b)_s$ (see next section for the definition of this function). In \cite{chandran2020generalization}, the authors of this paper proposed a generalization to the Menon's identity which was obtained by replacing the gcd function with $(a,b)_s$.  Various other  generalizations of the Menon's identity were provided by many authors. See, for example \cite{haukkanen2005menon}, \cite{haukkanen1996generalization}, \cite{ramaiah1978arithmetical}, \cite{toth2011menon} and the more recent papers \cite{haukkanen2019menon} and \cite{toth2019short}.

The Klee's function $\Phi_s$ is a natural generalization of the Euler totient function. The generalized divisor function $\tau_s$ defined in \cite{chandran2020generalization} generalizes the usual divisor function $\tau$ (see next section for the definitions of these generalizations). A natural question arising is if the gcd function in the Zhao-Cao identity (\ref{zhao}) is replaced with the generalized gcd function suggested by E. Cohen, what could be the possible change that can happen to this identity. We propose here a Menon-type identity  modifying the identity (\ref{zhao}) replacing the gcd function appearing in (\ref{zhao}) with generalized gcd function. Our techniques closely follow the style of arguments appearing in \cite{zhao2017another}. The main results we propose in this paper are the following.
\begin{theo}\label{theo1}
	Let $s,n \in \N$ and $\chi$ be a primitive Dirichlet character mod $n$, where $n$ is the $s^{th}$ power of some natural number. Then 
	\begin{displaymath}
	\sum\limits_{\substack{k=1\\(k,n)_s = 1}}^{n} (k-1,n)_s \chi(k) = \Phi_s(n).
	\end{displaymath}
	
\end{theo}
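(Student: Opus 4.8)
The plan is to run the argument in the spirit of Zhao and Cao's proof of (\ref{zhao}), but to replace their conductor bookkeeping by a direct subgroup-plus-orthogonality step, which should be the ``improvement'' advertised in the abstract. Write $n=m^{s}$. First I would observe that a Dirichlet character modulo $n$ vanishes at every $k$ with $\gcd(k,n)>1$, while every $k$ with $\gcd(k,n)=1$ automatically satisfies $(k,n)_{s}=1$; hence the left-hand side is unaffected if we replace the summation condition $(k,n)_{s}=1$ by $\gcd(k,n)=1$. Next I would use the $s$-analogue of Gauss's identity $\sum_{d\mid N}\phi(d)=N$, namely
$$(a,n)_{s}=\sum_{\substack{d\ge 1\\ d^{s}\mid a,\ d^{s}\mid n}}\Phi_{s}(d^{s}),$$
which follows from multiplicativity of $\Phi_{s}$ together with the telescoping $\sum_{e=0}^{E}\Phi_{s}(p^{se})=p^{sE}$ and should be available from \cite{chandran2020generalization,cohen1956some}. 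Applying it with $a=k-1$, noting that $d^{s}\mid n=m^{s}$ is equivalent to $d\mid m$, and interchanging the two summations, the sum becomes $\sum_{d\mid m}\Phi_{s}(d^{s})\,A_{d}$, where
$$A_{d}:=\sum_{\substack{1\le k\le n,\ \gcd(k,n)=1\\ k\equiv 1\ (\mathrm{mod}\ d^{s})}}\chi(k).$$

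The crux is to show $A_{d}=0$ for every $d\mid m$ with $d\ne m$. Since $d^{s}\mid n$, the residue classes contributing to $A_{d}$ are exactly the elements of the subgroup $G_{d}:=\ker\big((\Z/n\Z)^{\times}\to(\Z/d^{s}\Z)^{\times}\big)$, so $A_{d}=\sum_{x\in G_{d}}\chi(x)$, which by orthogonality equals $|G_{d}|$ if $\chi$ restricts trivially to $G_{d}$ and $0$ otherwise. The reduction map $(\Z/n\Z)^{\times}\to(\Z/d^{s}\Z)^{\times}$ is surjective, so triviality of $\chi$ on $G_{d}$ would mean $\chi$ descends to a character modulo $d^{s}$ inducing $\chi$, forcing the conductor of $\chi$ to divide $d^{s}$; since $\chi$ is primitive mod $n$ its conductor is $n$, and combined with $d^{s}\mid n$ this yields $d^{s}=n$, i.e.\ $d=m$. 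Translating ``$\chi$ is primitive mod $n$'' into ``$\chi$ is nontrivial on every $G_{d}$ with $d\mid m$, $d<m$'' is the one genuine use of the hypothesis and is the main obstacle; everything else is formal.

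Finally, only the term $d=m$ survives, and there $A_{m}=\chi(1)=1$, since the unique $k\in\{1,\dots,n\}$ with $k\equiv 1\pmod{m^{s}}$ is $k=1$. Hence the sum equals $\Phi_{s}(m^{s})=\Phi_{s}(n)$, as desired. If one preferred to avoid quoting the surjectivity of the reduction map and the behaviour of conductors, an alternative is to insert the Klee--M\"obius identity $[(k,n)_{s}=1]=\sum_{e^{s}\mid k,\ e^{s}\mid n}\mu(e)$ before interchanging sums and then evaluate the resulting character sums directly; but this still rests on primitivity and is longer, so I would present the subgroup version.
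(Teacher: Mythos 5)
Your proof is correct, but it takes a genuinely different route from the paper's. The paper first shows that $f(n)=\sum_{(k,n)_s=1}(k-1,n)_s\chi(k)$ is multiplicative (via the factorization $\chi_{rt}=\chi_r\chi_t$ and a Chinese-remainder splitting of the summation variable), reduces to the prime-power case $n=p^{qs}$, and then evaluates $f(p^{qs})$ by sorting the terms according to the value $(k-1,p^{qs})_s=p^{ts}$ and invoking Lemma \ref{l1}, which states that $\sum_{(k,p^{n-m})_s=1}\chi(kp^m+1)$ equals $-1$ for $m=n-s$ and $0$ otherwise, itself proved by the classical trick of multiplying through by some $\chi(bp^{n-s}+1)\neq 1$. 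You instead expand the generalized gcd via the Gauss-type identity $(k-1,n)_s=\sum_{d\mid m,\ d^s\mid k-1}\Phi_s(d^s)$ (valid as you sketch: $d\mapsto\Phi_s(d^s)$ is multiplicative and the sum telescopes at prime powers, the set of admissible $d$ being exactly the divisors of the largest $g$ with $g^s\mid\gcd(k-1,n)$), interchange the sums, and recognize each inner sum $A_d$ as a character sum over the kernel $G_d$ of $(\Z/n\Z)^{\times}\to(\Z/d^s\Z)^{\times}$; orthogonality plus primitivity kills every $d\neq m$, and the surviving term gives $\Phi_s(n)$. This is shorter and more conceptual, bypassing both the multiplicativity step and Lemma \ref{l1}; in fact your appeal to surjectivity of the reduction map is unnecessary, since triviality of $\chi$ on $G_d$ is literally the statement that $d^s$ is an induced modulus, which primitivity forbids for $d^s\neq n$. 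What the paper's more computational setup buys is reusability: Lemma \ref{l1} and the prime-power bookkeeping are recycled in Lemmas \ref{l3} and \ref{l4} to treat imprimitive characters in Theorem \ref{theo2}, whereas your orthogonality step would need a conductor-dependent refinement (counting the $d$ with $f_\chi\mid d^s$, in the spirit of T\'oth's method) to reach that case.
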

\begin{theo}\label{theo2}
	Let $\chi$ be a Dirichlet character mod $n$, where $n = m^{qs}$, $m,q,s \in \mathbb{N}$. If $d =m^{ts}$, $1 \leq t \leq q$ is the conductor of $\chi$ then 
	\begin{displaymath}
	\sum\limits_{\substack{k=1\\(k,n)_s=1}}^{n}(k-1,n)_s \chi(k) = \Phi_s(n)\tau_s(n/d).
	\end{displaymath}
\end{theo}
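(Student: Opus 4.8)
The plan is to reduce Theorem \ref{theo2} to Theorem \ref{theo1} via the standard factorization of a Dirichlet character through its conductor. Write $n = m^{qs}$ and $d = m^{ts}$, and let $\chi = \chi^* \chi_0$, where $\chi^*$ is the primitive character mod $d$ inducing $\chi$ and $\chi_0$ is the principal character mod $n$. The first step is to observe that since $d = m^{ts}$ and $n = m^{qs}$ are both $s$-th powers (being powers of $m^s$, or more carefully using that $s \mid ts$ and $s \mid qs$), the generalized gcd $(k,n)_s$ interacts well with the divisor structure: the condition $(k,n)_s = 1$ is exactly the condition that no $s$-th power of a prime dividing $m$ divides $k$, i.e. $\chi_0(k) \neq 0$ iff $(k,n)_s$ is coprime to $m$ in the relevant sense. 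So the sum is
\begin{displaymath}
S := \sum_{\substack{k=1\\(k,n)_s=1}}^{n} (k-1,n)_s \chi^*(k).
\end{displaymath}

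Next I would split the summation according to the value $e = (k-1,n)_s$, which ranges over the $s$-th-power divisors of $n$, i.e. $e = m^{js}$ for $0 \le j \le q$ (with the convention $(k-1,n)_s$ picks out the largest $m^{js}$ with $m^{js} \mid k-1$ and ... — the precise definition from the previous section should be invoked here). Writing $k - 1 = e\ell$ and tracking which residues $k$ mod $n$ give a prescribed value of $(k-1,n)_s$, the inner sum becomes a character sum of $\chi^*$ over an arithmetic progression. The key number-theoretic input is a Ramanujan-type / orthogonality evaluation: summing the primitive character $\chi^*$ mod $d$ against the indicator of $(k-1,n)_s = e$ produces a Gauss-sum factor that vanishes unless $e$ is divisible by an appropriate power of $m$ tied to the conductor $d = m^{ts}$. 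After this cancellation, only the terms with $d \mid e$ (equivalently $j \ge t$) survive, and on those terms $\chi^*(k) = \chi^*(1) = 1$ since $k \equiv 1 \pmod{d}$.

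The final step is the bookkeeping: the surviving terms are indexed by $s$-th-power divisors $e$ of $n$ that are multiples of $d$, i.e. by $s$-th-power divisors of $n/d = m^{(q-t)s}$, and for each such $e$ the count of $k$ in range with $(k,n)_s = 1$ and $(k-1,n)_s = e$ contributes exactly $\Phi_s(n)$ after summing, with the number of admissible $e$ equal to $\tau_s(n/d)$. Assembling, $S = \Phi_s(n)\,\tau_s(n/d)$. The main obstacle I anticipate is the second step: correctly handling the orthogonality/Gauss-sum cancellation for the generalized gcd condition, since $(k-1,n)_s = e$ is not a single congruence but a combination of a divisibility ($e \mid k-1$) and a coprimality-of-$s$-th-powers condition, so one must either Möbius-invert over $s$-th-power divisors (using the multiplicative machinery behind $\Phi_s$ and $\tau_s$ established in \cite{chandran2020generalization}) or prove a dedicated lemma that the partial character sum $\sum \chi^*(1 + e\ell)$ over the relevant $\ell$ vanishes precisely when $d \nmid e$. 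Once that lemma is in place, the rest is the multiplicative combinatorics of $s$-th-power divisors, which should go through exactly as in the $s=1$ case of \cite{zhao2017another}.
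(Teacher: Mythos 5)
There is a genuine gap at the heart of your second step. The ``dedicated lemma'' you hope for --- that the partial sum $\sum_{\ell}\chi^*(1+e\ell)$ vanishes precisely when $d\nmid e$, so that ``only the terms with $d\mid e$ survive'' --- is false, and the failure is not a technicality: it is exactly where the shape of the answer comes from. Already in the classical case $s=1$, $n=d=p$ with $\chi$ non-principal, the level $e=1$ contributes $\sum_{k\not\equiv 1}\chi(k)=-1\neq 0$, and without it you would get $p$ instead of $\phi(p)$. In the present setting the correct statement is the paper's Lemma \ref{l3}: for a character mod $p^n$ with conductor $p^l$, the sum $\sum_{(k,p^{n-m})_s=1}\chi(kp^m+1)$ equals $\Phi_s(p^{n-m})$ when $l\le m<n$, equals $-p^{n-l}$ (not $0$) when $m=l-s$, and vanishes only for $s\le m<l-s$. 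The nonzero negative term one level \emph{below} the conductor is indispensable: in Lemma \ref{l4} it cancels against the main terms to produce $(q-r+1)\Phi_s(p^a)$, whereas under your claimed cancellation the primitive prime-power case would give $p^a$ rather than $\Phi_s(p^a)=p^a-p^{a-s}$, contradicting Theorem \ref{theo1}. So the orthogonality input you defer to is the hard content, and as you state it the bookkeeping in your last paragraph cannot close.

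A second, smaller but real, problem is that you work directly with the composite modulus $n=m^{qs}$ and index the values of $(k-1,n)_s$ by $e=m^{js}$, $0\le j\le q$. For composite $m$ this is wrong: $(k-1,n)_s$ ranges over \emph{all} $s$-th power divisors of $n$, including mixed ones such as $p_1^{s}p_2^{2s}$, and the conductor need not interact with each prime the same way. The paper sidesteps both issues by first proving that $f(n)=\sum_{(k,n)_s=1}(k-1,n)_s\chi_n(k)$ is multiplicative (in the proof of Theorem \ref{theo1}), factoring $\chi$ and its conductor over the prime powers dividing $n$, and then evaluating each prime-power factor by Lemma \ref{l4} (which rests on Lemmas \ref{l1}--\ref{l3}), obtaining $(a_i-b_i+1)\Phi_s(p_i^{a_is})=\tau_s(p_i^{(a_i-b_i)s})\Phi_s(p_i^{a_is})$ and multiplying out. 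Your opening reduction $\chi=\chi^*\chi_0$ is salvageable (since $d=m^{ts}$ with $t\ge 1$ has the same prime support as $n$, $\chi^*(k)=0$ whenever $(k,n)>1$, though your stated justification via $(k,n)_s=1$ does not give this), but to complete the argument you would need to (i) reduce to prime powers by multiplicativity, and (ii) replace your vanishing claim by the three-case evaluation of Lemma \ref{l3}, at which point you are essentially reconstructing the paper's proof.
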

\section{Notations and basic results} 
Most of the notations, functions and identities we use in this paper are standard and their definitions can be found in \cite{tom1976introduction}.  We give below the definitions of some other less popular terms and functions which we use in this paper.
\begin{defi}[\cite{cohen1956some}]
For positive integers $a,b$ and $s$ the generalized gcd of $a$ and $b$ denoted by $(a,b)_s$ is defined to be the largest $l^s$ (where $l \in \N$) dividing both $a$ and $b$.  
\end{defi}
$(a, b)_1$ is thus the usual gcd of $a$ and $b$. Like the gcd function, $(a,b)_s = (b,a)_s$.

The next statement is elementary and can be proved easily. We state it without proof.
\begin{lemm}
 $(a,b)_{s}$ is multiplicative in first variable.
 \end{lemm}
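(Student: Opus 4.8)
The claim to establish is that for $a_1,a_2,b\in\N$ with $(a_1,a_2)=1$ one has $(a_1a_2,b)_s=(a_1,b)_s(a_2,b)_s$. The plan is to reduce this to two standard multiplicativity facts. First I would note that for $l\in\N$, the conditions $l^s\mid a$ and $l^s\mid b$ hold simultaneously exactly when $l^s\mid(a,b)$; hence $(a,b)_s$ is nothing but the largest $s$-th power dividing the ordinary gcd $(a,b)$. Writing $\kappa_s(n)=\prod_{p}p^{s\lfloor v_p(n)/s\rfloor}$ for this ``$s$-th power core'' (product over primes $p$, with $v_p$ the $p$-adic valuation), we get the identity $(a,b)_s=\kappa_s((a,b))$, and one checks immediately that every $s$-th power divisor of $n$ already divides $\kappa_s(n)$.

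Next I would record the two routine facts. (i) The ordinary gcd is multiplicative in its first variable, and when $(a_1,a_2)=1$ the numbers $(a_1,b)$ and $(a_2,b)$ are coprime, being divisors of the coprime numbers $a_1,a_2$; thus $(a_1a_2,b)=(a_1,b)(a_2,b)$ with coprime factors. (ii) The function $\kappa_s$ is multiplicative: if $(u,v)=1$ then for each prime $p$ at least one of $v_p(u),v_p(v)$ vanishes, so $\lfloor v_p(uv)/s\rfloor=\lfloor v_p(u)/s\rfloor+\lfloor v_p(v)/s\rfloor$ and the defining product splits. Combining (i) and (ii),
\[
(a_1a_2,b)_s=\kappa_s\big((a_1a_2,b)\big)=\kappa_s\big((a_1,b)\big)\,\kappa_s\big((a_2,b)\big)=(a_1,b)_s\,(a_2,b)_s,
\]
which is the assertion.

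If one prefers to stay with the definition, the same result follows from two divisibilities. Writing $(a_i,b)_s=l_i^s$, coprimality of $a_1,a_2$ forces $(l_1,l_2)=1$, so $(l_1l_2)^s$ is a common $s$-th power divisor of $a_1a_2$ and $b$, whence $(a_1,b)_s(a_2,b)_s\mid(a_1a_2,b)_s$. Conversely, if $(a_1a_2,b)_s=L^s$ then, because $(a_1,a_2)=1$, each prime power occurring in $L^s$ lies entirely in $a_1$ or entirely in $a_2$, so $L$ factors as $L_1L_2$ with $L_i^s\mid a_i$ and $L_i^s\mid b$; hence $L_i^s\mid(a_i,b)_s$ and, the two $(a_i,b)_s$ being coprime, $L^s\mid(a_1,b)_s(a_2,b)_s$. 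I expect the only genuinely non-formal point in either route to be exactly this splitting of an $s$-th power divisor of $a_1a_2$ along the coprime factorization $a_1a_2$ — equivalently, the multiplicativity of $\kappa_s$ — while everything else is bookkeeping with $p$-adic valuations.
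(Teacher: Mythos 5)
Your argument is correct. Note that the paper itself states this lemma without proof (calling it elementary), so there is no proof to compare against; your reduction $(a,b)_s=\kappa_s\bigl((a,b)\bigr)$ together with the multiplicativity of the gcd in the first variable and of the $s$-th power core $\kappa_s$ is exactly the kind of routine verification the authors had in mind, and the two divisibility inequalities in your second route correctly handle the only delicate point, namely that the maximal common $s$-th power divisor is maximal with respect to divisibility and that an $s$-th power divisor of $a_1a_2$ splits along the coprime factors.
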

 
 It can be further observed that  $(a,b)_{s}$ is not completely multiplicative as a single variable function of $a$. Also, it is not multiplicative in $s$.

\begin{defi}
If $(a,b)_s = 1$, then we say that $a$ and $b$ are relatively $s$-prime to each other. 
\end{defi}

\begin{defi}[\cite{klee1948generalization}]
The Klee's function $\Phi_s(n)$ is defined as the cardinality of the set $\{m\in \N : 1\leq m \leq n,(m,n)_s=1\}$. 
\end{defi}
Thus $\Phi_s(n)$ denotes the number of positive integers $\leq n$ that are relatively $s$-prime to $n$. Various properties satisfied by $\Phi_s(n)$ are listed in \cite[Section 2]{chandran2020generalization}. 

If $M$ is a complete residue system mod $n$, then the subset of elements from $M$ that are relatively $s$-prime to $n$ is called an $s$-reduced system. Further, if $M$ is a subset of  $\{a: 0\leq a < n\}$ then the $s$-reduced system is called a minimal $s$-reduced residue system (mod $n$).  
\begin{defi}
For natural numbers $n$ and $s$, by $\tau_s(n)$ we mean the number of $l^s$  dividing $n$ where $l\in \N$. 
\end{defi}
It was observed in \cite{chandran2020generalization} that $\Phi_s(n)$ and $\tau_s(n)$ are multiplicative in $n$.

The following lemma is essential to prove one of the main results that we propose in this paper.
 	
 \begin{lemm}\label{l2}\cite[Lemma 3]{cohen1956some} 
Let $A = \{m \mid 1 \leq m \leq n \text{ and } (m,n)_s = 1\}$ and let $d>0$ be any $s$\textsuperscript{th} power divisor of $n$. Then $A$ is the union of $\frac{\Phi_s(n)}{\Phi_s(d)}$ disjoint sets each of which is an $s$-reduced residue system mod $d$.\end{lemm}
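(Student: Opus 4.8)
The plan is to sort the elements of $A$ by their residues modulo $d$, prove that every residue class that actually occurs contributes the same number of elements, identify that number as $\Phi_{s}(n)/\Phi_{s}(d)$ by a counting argument, and then extract the asserted partition by choosing transversals through the classes.

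First I would note that reduction modulo $d$ carries $A$ into residue classes that are relatively $s$-prime to $d$: if $m\in A$ and $l\in\N$ satisfies $l^{s}\mid m$ and $l^{s}\mid d$, then $l^{s}\mid n$ because $d\mid n$, hence $l^{s}\le (m,n)_{s}=1$ and $l=1$; thus $(m,d)_{s}=1$, and since every $s$-th power dividing $d$ also divides the difference between $m$ and its least nonnegative residue $r$ modulo $d$, we get $(r,d)_{s}=1$ as well. Writing
\[
A_{r}=\{\,m\in A:\ m\equiv r\pmod d\,\}\qquad\bigl(0\le r<d,\ (r,d)_{s}=1\bigr),
\]
we obtain a decomposition $A=\bigcup_{r}A_{r}$ into pairwise disjoint sets.

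The core of the argument is to show that $|A_{r}|$ does not depend on $r$. For this I would run inclusion--exclusion over $P=\{p\text{ prime}:\ p^{s}\mid n\}$, split as $P=P_{1}\cup P_{2}$ with $P_{1}=\{p\in P:\ p\mid d\}$ and $P_{2}=\{p\in P:\ p\nmid d\}$. Because $d$ is an $s$-th power, $p\mid d$ forces $p^{s}\mid d$, so for $p\in P_{1}$ the congruence $m\equiv r\pmod d$ already yields $m\equiv r\pmod{p^{s}}$, while $(r,d)_{s}=1$ yields $p^{s}\nmid r$; hence $p^{s}\nmid m$ automatically and the primes of $P_{1}$ impose no extra condition. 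For $p\in P_{2}$ one has $\gcd(p,d)=1$, so by the Chinese remainder theorem the joint conditions $m\equiv r\pmod d$ and $\prod_{p\in S}p^{s}\mid m$ (for $S\subseteq P_{2}$) reduce to a single congruence modulo the divisor $d\prod_{p\in S}p^{s}$ of $n$, satisfied by exactly $n\big/\bigl(d\prod_{p\in S}p^{s}\bigr)$ integers in $\{1,\dots,n\}$. Hence
\[
|A_{r}|=\sum_{S\subseteq P_{2}}(-1)^{|S|}\frac{n}{d\prod_{p\in S}p^{s}},
\]
which is visibly independent of $r$; call this common value $N$. Since $|A|=\Phi_{s}(n)$ by definition and $A$ is the disjoint union of the $\Phi_{s}(d)$ sets $A_{r}$, it follows that $\Phi_{s}(n)=\Phi_{s}(d)\,N$, i.e. $N=\Phi_{s}(n)/\Phi_{s}(d)$. (Equivalently, the sum above equals $\tfrac{n}{d}\prod_{p\in P_{2}}(1-p^{-s})$, which matches $\Phi_{s}(n)/\Phi_{s}(d)$ through the product formula $\Phi_{s}(k)=k\prod_{p^{s}\mid k}(1-p^{-s})$ once one notes that for the $s$-th power $d$ the primes $p$ with $p^{s}\mid d$ are exactly those dividing $d$.)

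To conclude, enumerate each class as $A_{r}=\{m_{r,1},\dots,m_{r,N}\}$ and put $B_{j}=\{m_{r,j}:\ 0\le r<d,\ (r,d)_{s}=1\}$ for $1\le j\le N$. Then $B_{1},\dots,B_{N}$ are pairwise disjoint, their union is $A$, and each $B_{j}$ meets every residue class modulo $d$ that is relatively $s$-prime to $d$ in exactly one point; that is, each $B_{j}$ is an $s$-reduced residue system modulo $d$, which is the assertion. I expect the counting in the third step to be the delicate point: one must disentangle the fixed progression modulo $d$ from the $p^{s}$-divisibility conditions for $p\nmid d$ via the Chinese remainder theorem (this is precisely where the hypothesis $p^{s}\mid n$ enters), and handle separately the primes dividing $d$, which are harmless because $d$ is an $s$-th power. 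The degenerate cases $d=1$ and $d=n$ are worth a line but cause no trouble.
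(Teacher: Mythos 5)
Your proposal is correct, but there is nothing in the paper to compare it against: the authors do not prove this lemma at all, they import it verbatim as Lemma~3 of Cohen's 1956 paper \cite{cohen1956some}, so you have supplied a proof where the paper gives only a citation. Your argument is the natural self-contained one and it holds up: sorting $A$ by residues mod $d$ is legitimate because $(m,d)_s$ depends only on $m \bmod d$, and your key observation that $(m,n)_s=1$ forces $(m,d)_s=1$ makes the decomposition $A=\bigcup_r A_r$ over classes with $(r,d)_s=1$ exhaustive. The equidistribution step is where the hypothesis that $d$ is an $s$\textsuperscript{th} power genuinely enters, and you isolate it correctly: $p\mid d$ implies $p^s\mid d$, so the primes in $P_1$ impose no condition beyond $(r,d)_s=1$, while for $p\in P_2$ the coprimality $\gcd(p,d)=1$ lets the Chinese remainder theorem collapse the constraints to a single class modulo $d\prod_{p\in S}p^s$, which divides $n$, giving the exact count $n/(d\prod_{p\in S}p^s)$ and hence $|A_r|=N$ independent of $r$; the identity $\Phi_s(n)=\Phi_s(d)N$ then needs no product formula. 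The only points worth a half-line more care are that $N\geq 1$ (so each admissible class really is hit, which your transversals $B_j$ need in order to be full $s$-reduced systems) — this follows from $N=\frac{n}{d}\prod_{p\in P_2}(1-p^{-s})>0$ — and the tacit fact that $(m,n)_s=1$ is equivalent to $p^s\nmid m$ for every prime $p$ with $p^s\mid n$, which underlies the inclusion--exclusion. Cohen's own route is essentially the same equidistribution statement, so your proof can be read as reconstructing the omitted source argument rather than diverging from it.
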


 \section{Proofs of the main results} 

We here provide proofs of the claims we made in the first section. To prove Theorem \ref{theo1}, we need the following lemma. 
\begin{lemm}\label{l1}
	Let $s,n \in \mathbb{N}$ and $\chi$ be a primitive Dirichlet character mod $p^n$, where $p$ is prime and $n $ is a multiple of $s$.  If $m$ is a multiple of $s$ such that $s\leq m<n$, then  $$\sum\limits_{\substack{k = 1\\(k,p^{n-m})_s=1}}^{p^{n-m}}  \chi(kp^m+1) =\begin{cases} -1, \quad m = n-s\\
	0, \quad\text{ otherwise.}
	\end{cases}$$\\
\end{lemm}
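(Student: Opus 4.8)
\section*{Proof proposal for Lemma \ref{l1}}

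The plan is to reduce the sum to a character sum over the subgroups
\[
H_{j} \;=\; \{\, a \bmod p^{n} \;:\; a \equiv 1 \!\!\pmod{p^{j}} \,\} \;\leq\; (\Z/p^{n}\Z)^{\times}, \qquad 1 \le j \le n,
\]
each of order $p^{\,n-j}$. First I would note that, since $m$ and $n$ are multiples of $s$ with $m<n$, we have $n-m\ge s$, so $p^{s}$ is an $s$\textsuperscript{th} power dividing $p^{\,n-m}$; hence the condition $(k,p^{\,n-m})_{s}=1$ is simply $p^{s}\nmid k$. Writing the sum over $1\le k\le p^{\,n-m}$ with $p^{s}\nmid k$ as the sum over all $k\in\{1,\dots,p^{\,n-m}\}$ minus the sum over those $k$ divisible by $p^{s}$, and in the latter substituting $k=p^{s}k'$ (so $k'$ runs over $\{1,\dots,p^{\,n-m-s}\}$ and $kp^{m}+1=k'p^{m+s}+1$), the lemma reduces to evaluating
\[
\Sigma_{j} \;:=\; \sum_{k=1}^{p^{\,n-j}} \chi\!\left(k p^{j}+1\right)
\]
for $j=m$ and $j=m+s$; note $s\le m\le n-1$ and $2s\le m+s\le n$ by the divisibility hypotheses, and that the original sum equals $\Sigma_{m}-\Sigma_{m+s}$.

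Next I would establish the bijection $k\mapsto kp^{j}+1$ from a complete residue system modulo $p^{\,n-j}$ onto $H_{j}$: two values of $k$ give the same class modulo $p^{n}$ iff they agree modulo $p^{\,n-j}$, the image clearly lies in $H_{j}$, and $|H_{j}|=p^{\,n-j}$, so the map is onto. Therefore $\Sigma_{j}=\sum_{a\in H_{j}}\chi(a)$, which by orthogonality of characters equals $|H_{j}|$ if $\chi$ is trivial on $H_{j}$ and $0$ otherwise. The arithmetic input is primitivity: a character trivial on $H_{j}$ (for $1\le j\le n-1$) factors through $(\Z/p^{n}\Z)^{\times}/H_{j}\cong(\Z/p^{j}\Z)^{\times}$, hence is induced from a character modulo $p^{j}$, contradicting that $\chi$ is primitive modulo $p^{n}$. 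Thus $\Sigma_{j}=0$ for every $j$ with $1\le j\le n-1$, while $\Sigma_{n}=\chi(1)=1$.

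Finally I would assemble the pieces. Since $s\le m\le n-1$ we get $\Sigma_{m}=0$. If $m=n-s$ then $m+s=n$, so $\Sigma_{m+s}=\Sigma_{n}=1$ and the sum equals $\Sigma_{m}-\Sigma_{m+s}=-1$; if $m<n-s$ then $m+s\le n-1$, so $\Sigma_{m+s}=0$ and the sum equals $0$. This is exactly the asserted formula. The only genuinely delicate step is the identification of ``$\chi$ primitive modulo $p^{n}$'' with ``$\chi$ nontrivial on each $H_{j}$, $1\le j\le n-1$''; once that is in place, what remains is the bijection above, the standard orthogonality relation, and the bookkeeping of the substitution $k=p^{s}k'$.
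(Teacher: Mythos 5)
Your proof is correct, and it is organized genuinely differently from the paper's. The paper never passes to unrestricted sums or invokes orthogonality explicitly; it works with the restricted sum directly via a translation argument: by primitivity there is a $b$ with $bp^{n-s}+1\equiv 1\pmod{p^{n-s}}$ but $\chi(bp^{n-s}+1)\neq 1$, and multiplying the sum by $\chi(bp^{n-s}+1)$ merely permutes the summands, forcing the sum to vanish. This requires two separate cases ($m=n-s$ and $m<n-s$) and, in the second, a somewhat fussy verification that the shift $k\mapsto k+bp^{n-s-m}$ preserves the condition $(k,p^{n-m})_s=1$. You package the same arithmetic input --- primitivity means $\chi$ is nontrivial on every subgroup $H_j=\{a \bmod p^n : a\equiv 1\ (\mathrm{mod}\ p^j)\}$ with $j<n$ --- into the standard orthogonality relation, and your two preliminary observations (that $(k,p^{n-m})_s=1$ is just $p^s\nmid k$, and the resulting inclusion--exclusion $\Sigma_m-\Sigma_{m+s}$) handle both cases uniformly: the value $-1$ at $m=n-s$ simply appears as $-\Sigma_n=-\chi(1)$. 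The two arguments are equivalent in substance, since the translation trick is exactly the proof of the vanishing of a nontrivial character sum over a subgroup specialized to one witness element; but yours is cleaner, avoids the case split and the bijection bookkeeping, and makes the role of primitivity transparent through the identification $(\Z/p^n\Z)^{\times}/H_j\cong(\Z/p^j\Z)^{\times}$, whereas the paper's version is more elementary in that it never needs that quotient description.
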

\begin{proof}
	By the conditions imposed on $s, m $ and $n$ we see that $n\neq s$. Suppose $m = n-s$. Since $p^{n-s}$ is not an induced modulus for $\chi$, there exists an integer $b$, $1\leq b <p^s$ with $(bp^{n-s}+1,p^n) = 1$ and $bp^{n-s}+1\equiv 1 (\text{mod } p^{n-s})$, but $\chi(bp^{n-s}+1) \neq 1$. So
	\begin{align*}
	\chi(bp^{n-s}+1)  \sum\limits_{\substack{{k = 0}}}^{p^{s}-1} \chi(kp^{n-s}+1) 
	& = \sum\limits_{\substack{{k = 0}}}^{p^{s}-1} \chi(kbp^{2n-2s}+bp^{n-s}+kp^{n-s}+1)\\ 
	& = \sum\limits_{\substack{{k = 0}}}^{p^{s}-1} \chi((k+b)p^{n-s}+1) \\
	& = \sum\limits_{\substack{{k = 0}}}^{p^{s}-1} \chi(kp^{n-s}+1). 
	\end{align*}
	Hence $\sum\limits_{\substack{{k = 0}}}^{p^{s}-1} \chi(kp^{n-s}+1) = 0$ and so $ \sum\limits_{\substack{{k = 1}}}^{p^{s}} \chi(kp^{n-s}+1) = 0$. It follows that
	\begin{align*}
	\sum\limits_{\substack{k =1\\(k,p^{s})_s=1}}^{p^{s}} \chi(kp^{n-s}+1) 
	& =  \sum\limits_{\substack{k =1}}^{p^{s}} \chi(kp^{n-s}+1)-  \sum\limits_{\substack{k =1\\(k,p^{s})_s\neq 1}}^{p^{s}} \chi(kp^{n-s}+1)\\
	& = - \sum\limits_{\substack{k =1\\(k,p^{s})_s\neq 1}}^{p^{s}} \chi(kp^{n-s}+1)\\
	&= -\chi(kp^n+1)\\
	&=-\chi(1) \\
	&= -1
	\end{align*}
	Next we consider the case $m\neq n-s$. As in the previous case. 
	\begin{align*}
	\chi(bp^{n-s}+1)  \sum\limits_{\substack{{k = 1}\\{(k,p^{n-m})_s=1}}}^{p^{n-m}} \chi(kp^{m}+1) & =  \sum\limits_{\substack{{k = 1}\\{(k,p^{n-m})_s=1}}}^{p^{n-m}} \chi(bkp^{m}p^{n-s}+kp^{m}+bp^{n-s}+1)\\ & =  \sum\limits_{\substack{{k = 1}\\{(k,p^{n-m})_s=1}}}^{p^{n-m}} \chi(kp^m+bp^{n-s}+1)
	\end{align*}
	We claim that $\{kp^m+bp^{n-s}+1: 1\leq k\leq p^{n-m}, (k,p^{n-m})_s=1\}$ is the same as the residue system $kp^m+1$ mod $p^n$. Suppose  $1 \leq k_1 \leq p^{n-m}$ and $(k_1,p^{n-m})_s=1$. If $c\equiv k_1p^m+bp^{n-s}+1\text{ (mod $p^n$)}$ for some integer $c$, then let $k_2\equiv k_1+bp^{n-s-m}\text{ (mod $p^{n-m}$)}$. Note that if $(k_2,p^{n-m})_s=p^{rs}$ for some prime $p$ and $1 \leq r \leq \frac{n-m}{s}$, then we have $p^s\mid k_2$, which implies $p^s\mid k_1+bp^{n-s-m}$. But in this case $s\leq m \leq n-2s$ and $p^s\mid p^{n-s-m}$ implying that $p^s \mid k_1$ which is not possible. Therefore $(k_2,p^{n-m})_s=1$ and also $1\leq k_2 \leq p^{n-m}$. Now we have $k_2p^m+1 \equiv c\text{ (mod $p^n$)}$. If $k_1p^m+bp^{n-s}+1={k_1}' p^m+bp^{n-s}+1 \text{ (mod $p^n$)}$ then $k_1\equiv k_1'\text{ (mod $p^{n-m}$)}$. Similarly if $k_2p^m+1 \equiv k_2' p^m+1\text{ (mod $p^n$)}$ then $k_2 \equiv k_2' \text{ (mod $p^{n-m}$)}$. Therefore both these residue systems consists of  $\Phi_s(p^{n-m})$ different elements, and so we get $\chi(bp^{n-s}+1)\sum\limits_{\substack{k=1\\(k,p^{n-m})_s=1}}^{p^{n-m}}\chi(kp^m+1) = \sum\limits_{\substack{k=1\\(k,p^{n-m})_s=1}}^{p^{n-m}}\chi(kp^m+1)$. This implies that $\sum\limits_{\substack{k=1\\(k,p^{n-m})_s=1}}^{p^{n-m}}\chi(kp^m+1) = 0$ which is what we required.
	\end{proof}
	
\begin{proof}[Proof of Theorem \ref{theo1}]
	Let $f(n)=\sum\limits_{\substack{k=1\\(k,n)_s = 1}}^{n} (k-1,n)_s \chi_n(k)$, where $\chi_n$ is some Dirichlet character mod $n$. For $r,t \in \N$, we have $f(rt) = \sum\limits_{\substack{k=1\\(k,rt)_s = 1}}^{rt} (k-1,rt)_s \chi_{rt}(k)$. Now we use the fact that if $(r,t)=1$ then the two sets $\{k \mid 1\leq k\leq rt, (k,rt)_s=1\}$ and $\{tk_1+rk_2 \mid 1\leq k_1\leq r, (k_1,r)_s=1, 1\leq k_2\leq t,(k_2,t)_s=1\}$ are the same. Note that $\chi$ mod $k$ can be factored uniquely as a product of the form $\chi_k = \chi_{k_1}\chi_{k_2}\cdots \chi_{k_r}$, where $k = k_1k_2\cdots k_r$ with $(k_i,k_j)=1$ if $i \neq j$. In particular,  if $\chi$  is primitive then each $\chi_{k_i}$ is primitive mod $k_i$. Since the generalized gcd function is multiplicative in the second variable, we get
	\begin{align*}
	f(rt) & = \sum\limits_{\substack{k_1=1\\(k_1,r)_s = 1}}^{r}\sum\limits_{\substack{k_2=1\\(k_2,t)_s = 1}}^{t} (tk_1+rk_2-1,r)_s (tk_1+rk_2-1,t)_s\\& \times \chi_r(tk_1+rk_2)\chi_t(tk_1+rk_2)\\ 
	& = \sum\limits_{\substack{k_1=1\\(k_1,r)_s = 1}}^{r}\sum\limits_{\substack{k_2=1\\(k_2,t)_s = 1}}^{t} (tk_1+rk_2-1,r)_s (tk_1+rk_2-1,t)_s \chi_r(tk_1)\chi_t(rk_2),
	\end{align*} Now we observe that $(tk_1+rk_2-1,r)_s = (tk_1-1,r)_s$ and  $(tk_1+rk_2-1,t)_s =  (rk_2-1,t)_s$. So 
	\begin{align*}
	f(rt) & = \sum\limits_{\substack{k_1=1\\(k_1,r)_s = 1}}^{r}\sum\limits_{\substack{k_2=1\\(k_2,t)_s = 1}}^{t} (tk_1-1,r)_s (rk_2-1,t)_s \chi_r(tk_1)\chi_t(rk_2)\\ 
	& = \sum\limits_{\substack{k_1=1\\(k_1,r)_s = 1}}^{r}(tk_1-1,r)_s\chi_r(tk_1)\sum\limits_{\substack{k_2=1\\(k_2,t)_s = 1}}^{t}  (rk_2-1,t)_s \chi_t(rk_2). 
	\end{align*}
	Since $(r,t) = 1$,
	\begin{align*}
	f(rt)  & = \sum\limits_{\substack{k_1=1\\(k_1,r)_s = 1}}^{r}(k_1-1,r)_s\chi_r(k_1)\sum\limits_{\substack{k_2=1\\(k_2,t)_s = 1}}^{t}  (k_2-1,t)_s \chi_t(k_2)\\ 
	& = f(r)f(t).
	\end{align*} 
	Thus $f$ is multiplicative and so we need to verify our claim only for prime powers $p^a$, where $a = qs$, $q \in \N$. Therefore 
	\begin{align*}
	f(p^a) &=  \sum\limits_{\substack{k=1\\(k,p^a)_s = 1}}^{p^a}(k-1,p^a)_s\chi_{p^a}(k)\\ 
	& =\sum\limits_{\substack{k=1}}^{p^a}(k-1,p^a)_s\chi_{p^a}(k)- \sum\limits_{\substack{k=1\\(k,p^a)_s\neq 1}}^{p^a}(k-1,p^a)_s\chi_{p^a}(k)\\ 
	&= \sum\limits_{\substack{k=1}}^{p^a}(k-1,p^a)_s\chi_{p^a}(k)\\
	&= \sum\limits_{\substack{k=1\\(k-1,p^a)_s \neq 1}}^{p^a}(k-1,p^a)_s\chi_{p^a}(k)+ \sum\limits_{\substack{k=1\\(k-1,p^a)_s = 1}}^{p^a}\chi_{p^a}(k)\\ 
	& = \sum\limits_{\substack{k=1\\(k-1,p^a)_s \neq 1}}^{p^a}(k-1,p^a)_s\chi_{p^a}(k)+\sum\limits_{\substack{k=1\\}}^{p^a}\chi_{p^a}(k)- \sum\limits_{\substack{k=1\\(k-1,p^a)_s \neq 1}}^{p^a}\chi_{p^a}(k)\\ 
	& = \sum\limits_{\substack{k=1\\(k-1,p^a)_s \neq 1}}^{p^a}(k-1,p^a)_s\chi_{p^a}(k)- \sum\limits_{\substack{k=1\\(k-1,p^a)_s \neq 1}}^{p^a}\chi_{p^a}(k)\\ 
	& = \sum\limits_{\substack{k=1\\(k-1,p^a)_s \neq 1}}^{p^a}((k-1,p^a)_s-1)\chi_{p^a}(k)\\ 
	& = \sum\limits_{\substack{t=1}}^{q} \sum\limits_{\substack{k=1\\(k-1,p^a)_s = p^{ts} }}^{p^a}(p^{ts}-1)\chi_{p^a}(k)\\ 
	& = \sum\limits_{\substack{k=1\\(k-1,p^a)_s = p^{a} }}^{p^a}(p^{a}-1)\chi_{p^a}(k)+\sum\limits_{\substack{t=1}}^{q-1} \sum\limits_{\substack{k=1\\(k-1,p^a)_s = p^{ts} }}^{p^a}(p^{ts}-1)\chi_{p^a}(k)\\ 
	& = (p^a-1)+\sum\limits_{\substack{t=1}}^{q-1}(p^{ts}-1)\sum\limits_{\substack{k=1\\(k-1,p^a)_s = p^{ts} }}^{p^a}\chi_{p^a}(k). 
	\end{align*}
	We need to compute the sum $\sum\limits_{\substack{k=1\\(k-1,p^a)_s = p^{ts} }}^{p^a}\chi_{p^a}(k)$. We have $\sum\limits_{\substack{k=1\\(k-1,p^a)_s = p^{ts} }}^{p^a}\chi_{p^a}(k) =\sum\limits_{\substack{k=1\\(k,p^a)_s = p^{ts} }}^{p^a}\chi_{p^a}(k+1)$. To evaluate this, for a fixed prime power $p^{ts}$ we take the sum over all those $k$ in the range $1\leq k \leq p^a$ where $(k,p^a)_s=p^{ts}$. If we write $k = jp^{ts}$ then $1\leq k \leq p^a$ and $(k,p^a)_s=1$ if and only if $1 \leq j \leq p^{a-ts}$ and $(j,p^{a-ts})_s=1$. Then the last sum can be re-written as $\sum\limits_{\substack{k=1\\(k,p^a)_s = p^{ts} }}^{p^a}\chi_{p^a}(k+1) = \sum\limits_{\substack{j=1\\(j,p^{a-ts})_s = 1 }}^{p^{a-ts}}\chi_{p^{a}}(jp^{ts}+1)$ and $f(p^a)= (p^a-1)+\sum\limits_{\substack{t=1}}^{q-1}(p^{ts}-1) \sum\limits_{\substack{j=1\\(j,p^{a-ts})_s = 1 }}^{p^{a-ts}}\chi_{p^{a}}(jp^{ts}+1)$. By Lemma \ref{l1}, we obtain $\sum\limits_{\substack{j=1\\(j,p^{a-ts})_s = 1 }}^{p^{a-ts}}\chi_{p^{a}}(jp^{ts}+1)=\begin{cases} -1 \text{ if } t = q-1\\
	0 \text{ otherwise}
	\end{cases}$.  Then
	\begin{align*} 
	f(p^a) & = p^a-1+(p^{(q-1)s}-1)(-1)\\ 
	& =p^a-p^{qs-s}\\ 
	& =p^a-p^{a-s}\\ 
	& = \Phi_s(p^a),\text{which concludes the proof.}
	\end{align*}
	
\end{proof}
The above theorem reduces to Theorem 1.1 in \cite{zhao2017another} when $s=1$. We would like to further remark that Theorem 1.1 in \cite{zhao2017another} was proved using Lemma 2.1 and Lemma 2.2 in \cite{zhao2017another}. If one employs the technique we used above, only \cite[Lemma 2.1]{zhao2017another} is required to prove \cite[Theorem 1.1]{zhao2017another}.

To prove Theorem \ref{theo2}, we require the following two lemmas. First lemma  generalizes  \cite[Lemma 2.4]{zhao2017another}.
\begin{lemm}\label{l3}
	Let $s,n \in \N $ and $\chi $ be a Dirichlet character mod $p^n$, where $n = qs$ for some $q\in \N$. Let $p^l$ be the conductor of $\chi$ where $l = rs$ for some $r \in \N$ and $1 \leq r \leq q$ . If $m$ is a multiple of $s$ such that $s\leq m < n$, we have $\sum\limits_{\substack{k=1\\(k,p^{n-m})_s = 1}}^{p^{n-m}} \chi(kp^m+1) = \begin{cases}\Phi_s(p^{n-m}), \text{ if } l \leq m < n\\
	-p^{n-l}, \text{ if } m = l-s\\ 0, \text{ if } s\leq m < l-s.
	\end{cases}$	
\end{lemm}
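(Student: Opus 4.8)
The plan is to adapt the argument of Lemma \ref{l1}, but now keeping track of the conductor $p^l$ of $\chi$ rather than assuming $\chi$ is primitive. I would split the analysis into the three ranges of $m$ prescribed in the statement. For the range $l \le m < n$, the key observation is that $p^m$ is an induced modulus for $\chi$ (since $p^l \mid p^m$), so $\chi(kp^m+1) = 1$ for every $k$ with $kp^m+1$ coprime to $p^n$; and for $k$ in the range $1 \le k \le p^{n-m}$ with $(k,p^{n-m})_s = 1$ we indeed have $(kp^m+1,p^n)=1$. Hence the sum collapses to a count of such $k$, which by definition is $\Phi_s(p^{n-m})$.

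For the borderline case $m = l-s$, I would mimic the first half of the proof of Lemma \ref{l1}: since $p^{l-s}$ is \emph{not} an induced modulus, there is a $b$ with $1 \le b < p^s$ such that $(bp^{l-s}+1,p^n)=1$, $bp^{l-s}+1 \equiv 1 \pmod{p^{l-s}}$, but $\chi(bp^{l-s}+1) \ne 1$. Multiplying the sum $\sum_{k=0}^{p^{s}-1}\chi(kp^{l-s}+1)$ by $\chi(bp^{l-s}+1)$ and re-indexing $k \mapsto k+b$ shows this sum vanishes, hence so does $\sum_{k=1}^{p^s}\chi(kp^{l-s}+1)$. Writing $\Phi_s(p^s) = p^s - p^0$ of the $p^s$ values of $k$ as those with $(k,p^s)_s = 1$ and the remaining $p^{s-1}\cdot\ldots$ — more precisely, the $k$ with $(k,p^s)_s \ne 1$ are exactly the multiples of $p^s$ in the range, of which there is exactly one, namely $k = p^s$, giving $\chi(p^{l}+1)$ — wait, one must be careful: the complement consists of $k$ with $p^s \mid k$, and in $1 \le k \le p^s$ that is only $k=p^s$, contributing $\chi(1)=1$. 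Actually for this case $n-m = n-l+s$ can be large, so the correct move is: take the sum $\sum_{k=1}^{p^{n-l+s}}\chi(kp^{l-s}+1)$ over the \emph{full} residue system, which by the same $\chi(bp^{l-s}+1)$-multiplication trick is $0$, then subtract the terms with $(k,p^{n-l+s})_s \ne 1$; on those terms $p^s \mid k$ so $p^l \mid kp^{l-s}$ and $\chi(kp^{l-s}+1)=1$, and the number of such $k$ is $p^{n-l+s} - \Phi_s(p^{n-l+s})$; combined with Lemma \ref{l2} or a direct count one extracts the value $-p^{n-l}$. I will need to verify this bookkeeping carefully, as the count of non-$s$-prime residues is where the $p^{n-l}$ comes from.

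For the last range $s \le m < l-s$, I would run the second half of the proof of Lemma \ref{l1} verbatim with $p^{l-s}$ in place of $p^{n-s}$: pick $b$ with $1\le b < p^s$, $(bp^{l-s}+1,p^n)=1$, $\chi(bp^{l-s}+1)\ne 1$ (possible since $p^{l-s}$ is not an induced modulus), multiply $\sum_{\substack{k=1,(k,p^{n-m})_s=1}}^{p^{n-m}}\chi(kp^m+1)$ by $\chi(bp^{l-s}+1)$, expand $\chi(kp^m + bp^{l-s} + 1 + kbp^{m+l-s})$, drop the $p^n$-divisible cross term, and show that $k \mapsto k + bp^{l-s-m}$ permutes the $s$-reduced residues mod $p^{n-m}$ — here the constraint $s \le m < l-s$, i.e. $m \le l-2s$, guarantees $p^s \mid p^{l-s-m}$, so the shift does not disturb $s$-primality, exactly as in Lemma \ref{l1}. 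This forces the sum to equal itself times a root of unity $\ne 1$, hence to be $0$.

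The main obstacle I anticipate is the middle case $m=l-s$: getting the constant $-p^{n-l}$ (rather than merely $-1$, as in Lemma \ref{l1}) requires correctly counting how many $k \in \{1,\dots,p^{n-m}\}$ fail to be relatively $s$-prime to $p^{n-m}$ and checking that $\chi$ is constantly $1$ on the corresponding shifted arguments; this is a finite but delicate computation with the Klee function, for which Lemma \ref{l2} (Cohen's lemma) should be the right tool.
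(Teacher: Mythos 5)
Your route is genuinely different from the paper's, and it can be made to work. The paper disposes of both lower ranges at once: it factors $\chi=\psi\chi_1$ with $\psi$ primitive modulo the conductor $p^l$ and $\chi_1$ principal mod $p^n$, notes $(kp^m+1,p^n)=1$, uses Cohen's Lemma \ref{l2} to split the $s$-reduced system mod $p^{n-m}$ into $\Phi_s(p^{n-m})/\Phi_s(p^{l-m})=p^{n-l}$ copies of an $s$-reduced system mod $p^{l-m}$, and then quotes Lemma \ref{l1} for $\psi$ (with $l$ in place of $n$), which yields $-p^{n-l}$ for $m=l-s$ and $0$ for $s\le m<l-s$. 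You instead re-run the Lemma \ref{l1} manipulations directly on the imprimitive $\chi$, and for $m=l-s$ you extract $-p^{n-l}$ as ``complete sum $=0$ minus the non-$s$-prime terms, each equal to $1$''; your count of those terms ($p^{n-m-s}=p^{n-l}$ multiples of $p^s$ in $[1,p^{n-m}]$) is correct, so this is a viable, more self-contained alternative whose price is that it does not reuse Lemma \ref{l1} as a black box.

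One stated justification is wrong and must be repaired. In the range $s\le m<l-s$ you expand $\chi\bigl((bp^{l-s}+1)(kp^m+1)\bigr)$ and propose to ``drop the $p^n$-divisible cross term'' $kbp^{m+l-s}$; but $m+l-s$ is in general smaller than $n$ (take $n=6s$, $l=3s$, $m=s$), so that term is not annihilated mod $p^n$. What actually saves the step is that the cross term is divisible by $p^l$ and that $p^l$, being the conductor, is an induced modulus, so $\chi(u)=\chi(v)$ whenever $u\equiv v\pmod{p^l}$ and $(uv,p)=1$. The same periodicity mod $p^l$ is needed, and should be invoked explicitly, at the other places where you implicitly use it: to reduce $b$ into the range $1\le b<p^s$, to run the multiplication trick on the full sum in the case $m=l-s$ (there the cross term is $kbp^{2(l-s)}$, again only guaranteed divisible by $p^l$; note that $m=l-s\ge s$ forces $l\ge 2s$, which is what gives $2(l-s)\ge l$), and to assert $\chi(kp^{l-s}+1)=1$ when $p^s\mid k$. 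Once you state this conductor periodicity (which is exactly how the paper's factorization $\chi=\psi\chi_1$ enters its proof) and use it in place of the false $p^n$-divisibility claim, your argument is complete.
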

\begin{proof}
	First we consider the case $ l \leq m < n$. We have
	\begin{align*} 
	\sum\limits_{\substack{k=1\\(k,p^{n-m})_s = 1}}^{p^{n-m}} \chi(kp^m+1)  & = \sum\limits_{\substack{k=1\\(k,p^{n-m})_s = 1}}^{p^{n-m}} \chi(1)\\ & = \sum\limits_{\substack{k=1\\(k,p^{n-m})_s = 1}}^{p^{n-m}} 1\\ & = \Phi_s(p^{n-m}). 
	\end{align*}
	Next we move on to the case $s\leq m \leq l-s$. Note that every Dirichlet character $\chi$ mod $k$ can be expressed as a product of the form $\chi(n) = \psi(n)\chi_1(n)$ for all $n$, where $\psi$ is a primitive character modulo conductor of $\chi$ and $\chi_1$ is the principal character mod $n$. Then $\sum\limits_{\substack{k=1\\(k,p^{n-m})_s = 1}}^{p^{n-m}} \chi(kp^m+1) = \sum\limits_{\substack{k=1\\(k,p^{n-m})_s = 1}}^{p^{n-m}} \psi(kp^m+1) \chi_1(kp^m+1)$, where $\psi$ is the primitive character mod conductor of $\chi$ and $\chi_1$ is the principal character mod $p^n$. Since $s\leq m \leq l-s$, $(kp^m+1,p^n)=1$, using  Lemma \ref{l2} and Lemma \ref{l1} we get 
	\begin{align*} 
	\sum\limits_{\substack{k=1\\(k,p^{n-m})_s = 1}}^{p^{n-m}} \chi(kp^m+1)  
	& = \sum\limits_{\substack{k=1\\(k,p^{n-m})_s = 1}}^{p^{n-m}} \psi(kp^m+1)\\ 
	& = \frac{\Phi_s(p^{n-m})}{\Phi_s(p^{l-m})} \sum\limits_{\substack{k=1\\(k,p^{l-m})_s = 1}}^{p^{l-m}}\psi(kp^m+1)\\ 
	& = p^{n-l}\sum\limits_{\substack{k=1\\(k,p^{l-m})_s = 1}}^{p^{l-m}}\psi(kp^m+1)\\ 
	& =\begin{cases} -p^{n-l} \text{ if } m = l-s\\
	0 \text{ if } s\leq  m < l-s, 
	\end{cases}\\ \text{which completes the proof}. 
	\end{align*}
\end{proof}
Next we prove a lemma, which is key to the proof of Theorem \ref{theo2}.
\begin{lemm}\label{l4}
	Let $s,a \in N $ and $\chi $ be a Dirichlet character mod $p^a$, where $a = qs$ for some $q \in \N$. If $p^{rs}$ is the conductor of $\chi$ where $r\in \N$ and $1 \leq r \leq q$ , we have $\sum\limits_{\substack{k=1\\(k,p^{a})_s = 1}}^{p^{a}} (k-1,p^a)_s \chi(k) = (q-r+1) \Phi_s(p^a).$
\end{lemm}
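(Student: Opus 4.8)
The plan is to imitate the computation carried out in the proof of Theorem \ref{theo1}, but now keeping track of the extra terms that survive because $\chi$ need not be primitive. Writing $a = qs$ and starting from the sum $\sum_{k=1,\,(k,p^a)_s=1}^{p^a}(k-1,p^a)_s\chi(k)$, I would first use exactly the same telescoping manipulation as in the proof of Theorem \ref{theo1}: since $(k,p^a)_s = 1$ forces $\chi_{p^a}$ to be summed over the full residue system (the terms with $(k,p^a)_s\neq 1$ contribute nothing because those $k$ are never coprime to $p$ when $a\geq s$), one rewrites the sum as
\begin{align*}
\sum\limits_{\substack{k=1\\(k,p^a)_s=1}}^{p^a}(k-1,p^a)_s\chi(k)
&= \sum_{t=0}^{q}\bigl(p^{ts}-1\bigr)\sum\limits_{\substack{k=1\\(k,p^a)_s=p^{ts}}}^{p^a}\chi(k+1)
 + \sum_{k=1}^{p^a}\chi(k),
\end{align*}
and then, as before, substitute $k = jp^{ts}$ to turn the inner sum into $\sum_{j=1,\,(j,p^{a-ts})_s=1}^{p^{a-ts}}\chi(jp^{ts}+1)$. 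The point is that now I must also retain the term $\sum_{k=1}^{p^a}\chi(k)$, which vanishes unless $\chi$ is principal (i.e. $r$ would be $0$, excluded), so in our range $1\le r\le q$ that term is $0$ when $r\geq 1$; the real work is in the $t$-sum.

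The key step is to evaluate each inner sum $\sum_{j=1,\,(j,p^{a-ts})_s=1}^{p^{a-ts}}\chi(jp^{ts}+1)$ using Lemma \ref{l3} with $n = a$, $m = ts$, and conductor exponent $l = rs$. Lemma \ref{l3} tells us this equals $\Phi_s(p^{a-ts})$ when $rs\le ts < a$ (that is, $r\le t\le q-1$), equals $-p^{a-rs}$ when $ts = rs - s$ (that is, $t = r-1$), and equals $0$ when $s\le ts < rs-s$ (that is, $1\le t\le r-2$); one must also handle $t = 0$ and $t = q$ separately, since there $m=0$ or $m=n$ falls outside the hypotheses of Lemma \ref{l3} — for $t = q$ the inner sum is just $\chi(1) = 1$, and for $t = 0$ it is $\sum_{k=1}^{p^a}\chi(k+1) = \sum_{k=1}^{p^a}\chi(k) = 0$ since $r\ge1$. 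So after collecting terms, the full sum becomes
\begin{align*}
(p^a-1)\cdot 1 + \sum_{t=r}^{q-1}\bigl(p^{ts}-1\bigr)\Phi_s(p^{a-ts}) + \bigl(p^{(r-1)s}-1\bigr)\cdot\bigl(-p^{a-rs}\bigr),
\end{align*}
and it remains to check this equals $(q-r+1)\Phi_s(p^a) = (q-r+1)(p^a - p^{a-s})$.

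The main obstacle is purely the bookkeeping of the final algebraic identity: one must show
\begin{align*}
p^a - 1 + \sum_{t=r}^{q-1}\bigl(p^{ts}-1\bigr)\bigl(p^{a-ts}-p^{a-ts-s}\bigr) - p^{a-s} + p^{a-rs}
= (q-r+1)(p^a-p^{a-s}).
\end{align*}
Each summand $(p^{ts}-1)(p^{a-ts}-p^{a-ts-s})$ expands to $(p^a - p^{a-s}) - (p^{a-ts} - p^{a-ts-s})$, so the sum over $t$ from $r$ to $q-1$ contributes $(q-r)(p^a-p^{a-s})$ minus a telescoping sum $\sum_{t=r}^{q-1}(p^{a-ts}-p^{a-ts-s})$. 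That telescoping sum collapses to $p^{a-rs} - p^{a-qs} = p^{a-rs} - 1$, and substituting back, all the stray powers of $p$ cancel, leaving exactly $(q-r+1)(p^a-p^{a-s}) = (q-r+1)\Phi_s(p^a)$. I expect no conceptual difficulty beyond carefully isolating the boundary cases $t=0$, $t=r-1$, $t=q$ before applying Lemma \ref{l3}, and making sure the hypothesis $s \le m < n$ of Lemma \ref{l3} is respected (which is why $t=0$ and $t=q$ need the separate, elementary treatment above).
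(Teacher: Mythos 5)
Your proposal is correct and takes essentially the same route as the paper: both start from the prime-power expansion obtained in the proof of Theorem \ref{theo1}, evaluate the inner sums $\sum_{j}\chi(jp^{ts}+1)$ via Lemma \ref{l3}, and finish with the same telescoping algebra to reach $(q-r+1)\Phi_s(p^a)$. The only (harmless) differences are that the paper separates the cases $r=1$, $2\le r\le q-1$, $r=q$ while your single computation covers them all (the $t=r-1$ term carries the factor $p^{(r-1)s}-1=0$ when $r=1$), and your description of the $t=0$ inner sum as the full character sum is slightly inaccurate but irrelevant since its coefficient $p^{0\cdot s}-1$ vanishes.
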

\begin{proof}
	We prove the lemma case by case.\\
	Case $1$. $r=1$\\ In this case $p^s$ is the conductor of $\chi$. From the proof of Theorem \ref{theo1}, we have  $f(p^a)= (p^a-1)+\sum\limits_{\substack{t=1}}^{q-1}(p^{ts}-1) \sum\limits_{\substack{j=1\\(j,p^{a-ts})_s = 1 }}^{p^{a-ts}}\chi_{p^{a}}(jp^{ts}+1)$. Using Lemma \ref{l3},
	\begin{align*} 
	\sum\limits_{\substack{k=1\\(k,p^{a})_s = 1}}^{p^{a}} (k-1,p^a)_s \chi(k)  
	& = p^a-1+\sum\limits_{\substack{t=1}}^{q-1}(p^{ts}-1)\sum\limits_{\substack{j=1\\(j,p^{a-ts})_s = 1}}^{p^{a-ts}} \chi_{p^a}(jp^{ts}+1)\\
	& = p^a-1+\sum\limits_{\substack{t=1}}^{q-1}(p^{ts}-1)\Phi_s(p^{a-ts})\\
	& = p^a-1+\sum\limits_{\substack{t=1}}^{q-1}(p^{ts}-1)p^{a-ts}(1-\frac{1}{p^{s}})\\ 
	& = p^a-1+\sum\limits_{\substack{t=1}}^{q-1}(p^a-p^{a-ts})(1-p^{-s})\\ 
	& = p^a-1+\sum\limits_{\substack{t=1}}^{q-1}(p^a-p^{a-s}-p^{a-ts}+p^{a-(t+1)s})\\  
	& =p^a-1+\sum\limits_{\substack{t=1}}^{q-1}(p^a-p^{a-s})+\sum\limits_{\substack{t=1}}^{q-1}(p^{a-(t+1)s}-p^{a-ts})\\  
	& =p^a-1+(p^a-p^{a-s})(q-1)+(p^{a-qs}-p^{a-s})\\ 
	& =p^a-1+(p^a-p^{a-s})(q-1)+1-p^{a-s}\\ 
	& = q(p^a-p^{a-s})\\ 
	& =q \Phi_s(p^a)
	\end{align*}
	Case $2$. $r=q$\\ 
	In this case $\chi$ is the primitive character mod $p^a$. The claim immediately follows from Theorem \ref{theo1}.
	\\
	Case $3$. $2 \leq r \leq q-1$\\
	As in the first case we have  $$f(p^a)= (p^a-1)+\sum\limits_{\substack{t=1}}^{q-1}(p^{ts}-1) \sum\limits_{\substack{j=1\\(j,p^{a-ts})_s = 1 }}^{p^{a-ts}}\chi_{p^{a}}(jp^{ts}+1).$$ By Lemma \ref{l3}, we get $\sum\limits_{\substack{j=1\\(j,p^{a-ts})_s = 1}}^{p^{a-ts}} \chi(jp^{ts}+1) = \begin{cases}\Phi_s(p^{a-ts}), \text{ if }r \leq t < q\\
	-p^{a-rs}, \text{ if } t = r-1\\ 0, \text{ if } 1\leq t< r-1.
	\end{cases}$
    Now
	\begin{align*} 
	f(p^a) & = p^a-1+\sum\limits_{\substack{t=1}}^{q-1}(p^{ts}-1)\sum\limits_{\substack{j=1\\(j,p^{a-ts})_s = 1}}^{p^{a-ts}} \chi_{p^a}(jp^{ts}+1)\\ & = p^a-1+\sum\limits_{\substack{t=1}}^{r-2}(p^{ts}-1)\sum\limits_{\substack{j=1\\(k,p^{a-ts})_s = 1}}^{p^{a-ts}} \chi(jp^{ts}+1)+ (p^{(r-1)s}-1)(-p^{a-rs})\\
	& \text{ }+\sum\limits_{\substack{t=r}}^{q-1}(p^{ts}-1)\sum\limits_{\substack{j=1\\(k,p^{a-ts})_s = 1}}^{p^{a-ts}} \chi(jp^{ts}+1) \\ 
	& = p^a-1-(p^{rs-s}-1)p^{a-rs}+\sum\limits_{\substack{t=r}}^{q-1}(p^{ts}-1)\Phi_s(p^{a-ts})\\ & = p^a-1-(p^{rs-s}-1)p^{a-rs}+\sum\limits_{\substack{t=r}}^{q-1}(p^{ts}-1)p^{a-ts}(1-\frac{1}{p^s})\\ 
	& =p^a-1-p^{a-s}+p^{a-rs}+\sum\limits_{\substack{t=r}}^{q-1}(p^{ts}-1)p^{a-ts}(1-p^{-s}) \\  
	& =p^a-1-p^{a-s}+p^{a-rs}+\sum\limits_{\substack{t=r}}^{q-1}(p^{a}-p^{a-s}-p^{a-ts}+p^{a-(t+1)s})\\  
	&=p^a-1-p^{a-s}+p^{a-rs}+\sum\limits_{\substack{t=r}}^{q-1}(p^{a}-p^{a-s})+\sum\limits_{\substack{t=r}}^{q-1}(p^{a-(t+1)s}-p^{a-ts})\\
	& = p^a-1-p^{a-s}+p^{a-rs}+(p^{a}-p^{a-s})(q-r)+p^{a-qs}-p^{a-rs}\\
	& =p^a-1-p^{a-s}+(p^a-p^{a-s})(q-r)+1 \\
	& =(q-r+1)(p^a-p^{a-s})\\
	& = (q-r+1)\Phi_s(p^a).
	\end{align*} 
\end{proof}

\begin{rema}
 Lemma 3.4 is very much similar to \cite[Lemma 3.1]{zhao2017another}. The identity in \cite[Lemma 3.1]{zhao2017another} reduces to the Menon's identity when $\chi$ is a principal character. But because of the assumptions in the lemma above,  $\chi$ cannot be taken as the principal character and so this lemma cannot be strictly taken as a generalization of \cite[Lemma 3.1]{zhao2017another}.
\end{rema}
Finally we prove Theorem \ref{theo2}, which is similar to Theorem 1.2 in \cite{zhao2017another}. But our conditions are more restrictive than those appearing in \cite[Theorem 1.2]{zhao2017another}. 

\begin{proof}[Proof of Theorem \ref{theo2}]
	We use the fact that if $n = p_1^{a_1}p_2^{a_2}\cdots p_r^{a_r}$ then $\chi_n = \chi_{p_1^{a_1}}\chi_{p_2^{a_2}}\cdots\chi_{p_r^{a_r}}$, where $\chi_n$ is the Dirichlet character mod $n$. Also if $g(\chi)$ denotes the conductor of $\chi$, then  $g(\chi_n) = g(\chi_{p_1^{a_1}})g(\chi_{p_2^{a_2}})\cdots g(\chi_{p_r^{a_r}})$. Let $n =p_1^{a_1s}p_2^{a_2s}\cdots p_r^{a_rs}$, $d =  p_1^{b_1s}p_2^{b_2s}\cdots p_r^{b_rs}$ where $1\leq b_i\leq a_i$. Now $f(n) = \sum\limits_{\substack{k=1\\(k,n)_s=1}}^{n}(k-1,n)_s \chi_n(k)$ is multiplicative. Therefore
	\begin{align*}
	\sum\limits_{\substack{k=1\\(k,n)_s=1}}^{n}(k-1,n)_s \chi(k)& = f(n)\\& =  f(p_1^{a_1s})f(p_2^{a_2s})\cdots f(p_r^{a_rs})\\& = \prod\limits_{\substack{i=1}}^{r}f(p_i^{a_is})\\& = \prod\limits_{\substack{i=1}}^{r}\sum\limits_{\substack{k=1\\(k,p_i^{a_is})_s=1}}^{p_i^{a_is}}(k-1,p_i^{a_is})_s \chi_{p_i^{a_is}}(k)
	\end{align*}
	Note that $p_1^{b_1s}p_2^{b_2s}\cdots p_r^{b_rs} = g(\chi_{p_1^{a_1s}})g(\chi_{p_2^{a_2s}})\cdots g(\chi_{p_r^{a_rs}})$. It is clear that $g(\chi_{p_i^{a_is}})=p_i^{b_is}$. Hence by Lemma \ref{l4} , 
	\begin{align*}
	\sum\limits_{\substack{k=1\\(k,n)_s=1}}^{n}(k-1,n)_s \chi(k)& =  \prod\limits_{\substack{i=1}}^{r}(a_i-b_i+1)\Phi_s(p_i^{a_is})\\& =\prod\limits_{\substack{i=1}}^{r}\tau_s(p_i^{(a_i-b_i)s})\Phi_s(p_i^{a_is})\\& = \Phi_s(n)\tau_s(\frac{n}{d}),\\ \text{which completes the proof.}
	\end{align*}
\end{proof}
\begin{rema}
 A strict generalization of Theorem 1.2 in \cite{zhao2017another} would have been $\sum\limits_{\substack{k=1\\(k,n)_s=1}}^{n}(k-1,n)_s \chi(k) = \Phi_s(n)\tau_s(n/d)$, where $\chi$ is a Dirichlet character mod $n$ with conductor $d$. But this identity cannot be derived. For example,  if we take $q = 1$, $s = 2$, $r=0$ and $p = 2$, the LHS of this identity  evaluates to $(0,4)_2+(2,4)_2=5$ where as the RHS gives $6$. 
 \end{rema}
In \cite{toth2018menon}, T{\'o}th  derived an identity similar to Menon's identity involving even functions mod $n$, M\"{o}bius function and the Euler totient function. Note that an arithmetical function is $n$-even if $f(k)=f((k,n))$. A concept similar to $n$-even function is $(n,s)$-even functions defined by McCarthy. An arithmetical function $f$ is $(n,s)$-even if $f(k)=f((k, n^s)_s)$  (see \cite{mccarthy1960generation} for details). Many of the properties of such functions were studied in \cite{namboothiri2019discrete}. We feel that T{\'o}th's results can be generalized to $(n,s)$-even functions and similar identities can be derived if one uses the results appearing in \cite{namboothiri2019discrete}. 
    
\section{Acknowledgements}
The first author thanks the University Grants Commission of India for providing financial support for carrying out research work through their Junior Research Fellowship (JRF) scheme. The third author thanks the Kerala State Council for Science,Technology and Environment, Thiruvananthapuram, Kerala, India for providing financial support  for carrying out research work.


\end{document}